\newtheorem{thm}{Theorem}
\newtheorem{prp}{Proposition}
\newtheorem{lma}{Lemma}
\theoremstyle{definition}
\newtheorem{df}{Definition}
\newtheorem{ex}{Example}
\def\cvx{{\rm cvx}}
\begin{document}
\title{Convex and subharmonic functions on graphs}
\subjclass[2010]{Primary: 26A51; Secondary: 31C20}
\keywords{Convex, Subharmonic, Discrete, Graphs.}

\author{M.J. Burke}
\address{Spring Hill College, 4000 Dauphin Street, Mobile, Alabama, 36608-1791}
\email{mjburke@shc.edu}

\author{T.L. Perkins}
\address{Spring Hill College, 4000 Dauphin Street, Mobile, Alabama, 36608-1791}
\email{tperkins@shc.edu}
\date{\today}

\begin{abstract}
We explore the relationship between convex and subharmonic functions on discrete sets. Our principal
concern is to determine the setting in which a convex function is necessarily subharmonic. We initially
consider the primary notions of convexity on graphs and show that more structure is needed to establish
the desired result. To that end, we consider a notion of convexity defined on lattice-like graphs
generated by normed abelian groups. For this class of graphs, we are able to prove that all convex
functions are subharmonic.
\end{abstract}

\maketitle
\thispagestyle{empty}

\section{Introduction}

Classical analysis provides several equivalent definitions of a convex function, which have led to several non-equivalent concepts of a convex function on a graph.  As an interesting alternative, there appears to be a consensus on how to define subharmonic functions on graphs.  In the real variable counterpart, all convex functions are subharmonic.  It is the aim of this paper to investigate this relationship in the discrete setting.

We show that in the setting of weighted graphs over a normed abelian group one can prove analogs of some classical analysis theorems relating convexity to subharmonic functions.  In particular, (Theorem \ref{T:cvx=>sub}) all convex functions are subharmonic, (Lemma \ref{L:dist-to-a-point=cvx}) for a fixed point $a\in X$, the distance function $d(x,a)$ is convex, and (Propositions \ref{P:dist-cvx=>cvx} and \ref{P:cvx=>dist-cvx}) that a set $F$ is convex if and only if the distance function $d(x,F) = \inf_{y\in F}d(x,y)$ is subharmonic.

For a discrete set with metric, there is generally one straight forward way to define convex sets and convex functions on them. For completeness and ease of reference we present these in Section \ref{S:fund-conp}.  The definitions we give (or something equivalent to them) can be traced back at least to $d$-convexity \cite{GSS73, S72} and $d$-convex functions \cite{SS79}, and possibly much earlier.   Graphs admit a natural metric, i.e. length of the shorted path between two vertices, which leads to one notion of convexity on graphs studied in \cite{S83,S91}.  The notion of $d$-convexity on graphs when $d$ is the standard graph metric is equivalent to the more common notion of geodesic convexity \cite{CMOP05, FJ86}.

Common to \cite{CMOP05, FJ86, S83, S91}, one starts with a graph and then puts a convexity theory on it, by using the graph metric.  However in Section \ref{S:graph-metric} we show that convex sets and functions defined on graphs with respect to the graph metric have a few pleasant but mostly a large number of undesirable properties.  Thereby breaking the analogy with their classical analysis counterparts.

Another approach taken here in Section \ref{S:background_metric} is to allow the vertices themselves to have some underlying structure, e.g. a normed abelian group, and force the edges to be compatible with this metric.  (As opposed to making a metric compatible with the edges.)  In the setting of a normed abelian group there are many notions of a convex functions, see \cite{K04} and references therein.  One introduced in \cite{K04} provides a natural extension of geodesic convexity that makes use of the additional abelian group structure.  In this setting convex and subharmonic functions are of particular interest to image analysis, e.g. \cite{K04, K05}.  In this setting we are able to prove theorems analogous to several standard results from classical analysis.

In particular, (Theorem \ref{T:cvx=>sub}) all convex functions are subharmonic, (Lemma \ref{L:dist-to-a-point=cvx}) for a fixed point $a\in X$, the distance function $d(x,a)$ is convex, and (Propositions\ref{P:dist-cvx=>cvx} and \ref{P:cvx=>dist-cvx}) that a set $F$ is convex if and only if the distance function $d(x,F) = \inf_{y\in F}d(x,y)$ is subharmonic.

\section{Fundamental concepts}\label{S:fund-conp}
We will always assume that a graph is locally finite.

\subsection{Convexity}

Let $X$ be an at most countable set with a metric $d$, i.e. $d\colon X\times X \rightarrow \mathbb{R}$ with the properties
\begin{enumerate}
\item $d(x,y)\ge 0$ all $x,y \in X$ with $d(x,y)=0$ if and only if $x=y$,
\item $d(x,y)=d(y,x)$, and
\item $d(x,y)\le d(x,z)+d(z,y)$.
\end{enumerate}

Traditionally a set $A$ is convex if for all points $x,y\in A$ every point on the line segment connecting them is also in $A$.  Notice that a point $z$ is on the line segment connecting $x,y\in A$ if and only if $d(x,y)=d(x,z)+d(z,y)$.  Hence we take the following definitions:

For $A\subset X$ define
\[c_1(A) = \{z\in X\colon d(x,y)=d(x,z)+d(z,y) \text{ for some } x,y\in A\} \]
when $A=\emptyset$, take $c_1(\emptyset)=\emptyset$, and inductively
$c_n(A)=c_1(c_{n-1}(A))$.
Note that $0=d(x,x)=d(x,x)+d(x,x)$, hence $A\subset c_1(A)\subset \cdots \subset c_n(A)$ for all $n$.

\begin{df}
Let $A\subset X$.  The \emph{convex hull} of $A$ is
\[\cvx(A) = \bigcup_{n=1}^\infty c_n(A).\]
Naturally, the set $A$ is said to be \emph{convex} if $\cvx(A)=A$.  Clearly $\emptyset$ and $X$ are convex.

We say that the point $z$ is \emph{in between} $x$ and $y$ whenever $d(x,y)=d(x,z)+d(z,y)$ is satisfied.
\end{df}

Consequently,
\begin{lma}\label{L:a_cvx<=>a=c_1(a)}
A set $A\subset X$ is convex if and only if $A=c_1(A)$.
\end{lma}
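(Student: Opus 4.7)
The plan is to unpack the definition of convexity and reduce both directions to the monotone chain $A \subset c_1(A) \subset c_2(A) \subset \cdots$ that is already observed in the paper.

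For the forward direction, I would assume $A$ is convex, meaning $\cvx(A) = \bigcup_{n=1}^\infty c_n(A) = A$. Since every $c_n(A)$ is a subset of the union, in particular $c_1(A) \subset A$. Combined with the containment $A \subset c_1(A)$ noted in the paper, this gives $A = c_1(A)$ immediately.

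For the converse, I would assume $A = c_1(A)$ and prove by induction on $n$ that $c_n(A) = A$ for every $n \geq 1$. The base case $n=1$ is the hypothesis. For the inductive step, assuming $c_{n-1}(A) = A$, we apply $c_1$ to both sides and use the definition $c_n(A) = c_1(c_{n-1}(A))$ together with the hypothesis $c_1(A) = A$ to conclude $c_n(A) = c_1(A) = A$. Then $\cvx(A) = \bigcup_{n=1}^\infty c_n(A) = A$, so $A$ is convex.

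There is no real obstacle here: the lemma is essentially a bookkeeping statement that says the iterative closure process stabilizes as soon as one step stabilizes. The only thing to be careful about is the indexing convention for $c_n$, to make sure the induction starts at the right place and that $c_1$ is genuinely idempotent on sets fixed by it.
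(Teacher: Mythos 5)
Your proposal is correct and follows essentially the same route as the paper's proof: the converse is the induction $c_n(A)=c_1(c_{n-1}(A))=c_1(A)=A$, and the forward direction uses $A\subset c_1(A)\subset \cvx(A)=A$. No gaps to report.
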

\begin{proof}
If $A=c_1(A)$ then $c_2(A) = c_1(c_1(A))=c_1(A)=A$.  Hence by induction $c_n(A)=A$ and so $A=\cup c_n(A) = \cvx(A)$.  Thus $A$ is convex.

Suppose that $A$ is convex.  Then $A=\cvx(A) = \cup c_n(A) \supset c_1(A) \supset A$.  Thus $A=c_1(A)$.
\end{proof}

\begin{prp}
For all sets $A, B\subset X$,
\begin{align}
A &\subset \cvx(A) \\
A\subset B &\Rightarrow \cvx(A)\subset \cvx(B)\\
\cvx(A) &= \cvx(\cvx(A)).
\end{align}
\end{prp}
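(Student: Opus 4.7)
The plan is to prove the three properties in order, leaning on the chain $A\subset c_1(A)\subset c_2(A)\subset\cdots$ already noted before the definition, together with Lemma \ref{L:a_cvx<=>a=c_1(a)}. Property (1) is immediate: since $A\subset c_1(A)$, we have $A\subset \bigcup_{n\ge 1} c_n(A)=\cvx(A)$.

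For property (2), I would first observe that $c_1$ is monotone on subsets: if $A\subset B$ and $z\in c_1(A)$, witness points $x,y\in A$ lie in $B$, so $z\in c_1(B)$. Then a trivial induction gives $c_n(A)\subset c_n(B)$ for every $n$, and taking the union of both sides yields $\cvx(A)\subset \cvx(B)$.

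Property (3) is the substantive step, and I would attack it via Lemma \ref{L:a_cvx<=>a=c_1(a)} by showing that $\cvx(A)$ is itself convex, i.e. $c_1(\cvx(A))=\cvx(A)$. One inclusion is part (1) applied to $\cvx(A)$. For the reverse, let $z\in c_1(\cvx(A))$, so $d(x,y)=d(x,z)+d(z,y)$ for some $x,y\in \cvx(A)$. Then $x\in c_n(A)$ and $y\in c_m(A)$ for some indices, and since the family $\{c_k(A)\}$ is nested, taking $k=\max\{n,m\}$ puts both $x$ and $y$ in $c_k(A)$. Hence $z\in c_1(c_k(A))=c_{k+1}(A)\subset \cvx(A)$. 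By Lemma \ref{L:a_cvx<=>a=c_1(a)}, $\cvx(A)$ is convex, which by definition means $\cvx(\cvx(A))=\cvx(A)$.

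The only point requiring any care is the reverse inclusion in (3); everything else is bookkeeping. The key observation that unlocks it is that the $c_n(A)$ form an increasing chain, which lets us put the two witness points into a common $c_k(A)$ and close the argument with one more application of $c_1$.
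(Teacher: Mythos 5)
Your proposal is correct and follows essentially the same route as the paper: part (1) from the chain $A\subset c_1(A)\subset\cdots$, part (2) by monotonicity of $c_1$ plus induction, and part (3) by using the nestedness of the $c_n(A)$ to place both witness points in a common $c_k(A)$ and conclude $c_1(\cvx(A))=\cvx(A)$ via Lemma \ref{L:a_cvx<=>a=c_1(a)}. Your explicit use of $k=\max\{n,m\}$ is just a slightly more careful phrasing of the same step the paper performs implicitly.
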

\begin{proof}
\begin{enumerate}
\item We've already shown that $A\subset c_1(A)\subset \cdots \subset c_n(A)$ for all $n$ and so $A\subset \cup c_n(A)=\cvx(A)$.

\item  For any sets $X$ and $Y$, if $X\subset Y$ then $c_1(X)\subset c_2(Y)$.  Indeed for any $z\in c_1(X)$ there exists by definition $x_1,x_2\in X$ so that $d(x_1,x_2)=d(x_1,z)+d(z,x_2)$, but as $x_1,x_2\in X\subset Y$ this shows that $z\in c_1(Y)$.  Then as $A\subset B$, we have $c_1(A)\subset c_1(B)$.  Then by induction, $c_n(A)\subset c_n(B)$.  Therefore $\cvx(A)\subset \cvx(B)$.

\item  The claim $\cvx(A) = \cvx(\cvx(A))$ amounts to saying that $\cvx(A)$ is convex.  We will use Lemma 1 to show this.  Consider any $z\in c_1(\cvx(A))$.  This means there exists $x,y\in \cvx(A) = \cup c_n(A)$ so that $d(x,y) = d(x,z)+d(z,y)$.  However as $A\subset c_1(A)\subset c_2(A)\subset \cdots \subset c_n(A) \subset \cdots$ we know $x, y\in c_n(A)$ for some $n$, and so $z\in c_1(c_n(A))=c_{n+1}(A)\subset \cvx(A)$.  Hence $c_1(\cvx(A))=\cvx(A)$.
\end{enumerate}
\end{proof}

The following proposition shows that our definition of convex hull is equivalent to the usual one, i.e. the convex hull of $A$ is the intersection of all convex sets that contain $A$.
\begin{prp}
For any $A\subset X$, the set $\cvx(A)$ is the intersection of all convex sets that contain $A$.
\end{prp}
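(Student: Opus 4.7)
The plan is to prove the two inclusions separately, relying entirely on the previous proposition. Let $\mathcal{C}$ denote the collection of all convex subsets of $X$ containing $A$, and set $I = \bigcap_{C \in \mathcal{C}} C$. We want to show $\cvx(A) = I$.

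For the inclusion $I \subset \cvx(A)$, I would observe that by part (1) of the previous proposition, $A \subset \cvx(A)$, and by part (3), $\cvx(A)$ is itself convex. Therefore $\cvx(A) \in \mathcal{C}$, so the intersection $I$ is contained in $\cvx(A)$.

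For the reverse inclusion $\cvx(A) \subset I$, I would take an arbitrary $C \in \mathcal{C}$ and show $\cvx(A) \subset C$. Since $A \subset C$, monotonicity (part (2) of the previous proposition) yields $\cvx(A) \subset \cvx(C)$. But $C$ is convex, so $\cvx(C) = C$, giving $\cvx(A) \subset C$. As this holds for every $C \in \mathcal{C}$, we obtain $\cvx(A) \subset I$.

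There is no real obstacle here; the statement is a formal consequence of the previous proposition, which has already established that $\cvx$ is extensive, monotone, and idempotent (a closure operator). The only thing to be careful about is that $\mathcal{C}$ is nonempty, which is immediate since $X$ itself is convex and contains $A$, so the intersection $I$ is well-defined.
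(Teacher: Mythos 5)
Your proof is correct and follows essentially the same argument as the paper: both directions are obtained from the extensivity, monotonicity, and idempotence of $\cvx$ established in the previous proposition, exactly as the paper does. Your extra remark that the collection of convex sets containing $A$ is nonempty (since $X$ is convex) is a harmless refinement the paper leaves implicit.
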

\begin{proof}
Let $B\subset X$ be a convex set containing $A$.  As noted previously $A\subset B$ implies $\cvx(A)\subset \cvx(B)$.  However $\cvx(B)=B$ by hypothesis.  Hence $\cvx(A) \subset B$ for all convex $B$ containing $A$.  Therefore
\[\cvx(A) \subset \bigcap\{B\colon  A\subset B \text{ and } B \text{ convex} \}.\]

As $\cvx(A)$ is convex and $A\subset \cvx(A)$, it must be included in the intersection above.  Thus
\[\bigcap\{B\colon  A\subset B \text{ and } B \text{ convex} \} \subset \cvx(A).\qedhere\]
\end{proof}

\begin{prp}
If $A$ and $B$ are convex, then $A\cap B$ is convex.
\end{prp}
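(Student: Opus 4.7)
The plan is to reduce to Lemma \ref{L:a_cvx<=>a=c_1(a)}, so that convexity of $A\cap B$ is equivalent to the single-step equality $A\cap B = c_1(A\cap B)$. The inclusion $A\cap B \subset c_1(A\cap B)$ is already part of the general chain $C\subset c_1(C)$ noted right after the definition, so only the reverse inclusion $c_1(A\cap B)\subset A\cap B$ needs attention.

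For that, I would take an arbitrary $z\in c_1(A\cap B)$ and unpack the definition: there exist $x,y\in A\cap B$ with $d(x,y)=d(x,z)+d(z,y)$. The key observation is that the same witnesses $x,y$ live in $A$ and in $B$ separately, so the same identity exhibits $z$ as an element of $c_1(A)$ and of $c_1(B)$. Applying Lemma \ref{L:a_cvx<=>a=c_1(a)} to the convex sets $A$ and $B$ gives $c_1(A)=A$ and $c_1(B)=B$, hence $z\in A$ and $z\in B$, i.e.\ $z\in A\cap B$. This gives the reverse inclusion, and concludes that $A\cap B$ is convex.

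There is essentially no obstacle here, since the ``in-betweenness'' condition is a pointwise metric identity and a single witnessing pair in $A\cap B$ serves simultaneously for $A$ and for $B$. The proof does not require iterating through $c_n$: everything is settled at the $c_1$ level via Lemma \ref{L:a_cvx<=>a=c_1(a)}. The same template would immediately extend to an arbitrary intersection $\bigcap_i A_i$ of convex sets, should one wish to state it.
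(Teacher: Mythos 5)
Your proposal is correct and follows essentially the same route as the paper: both reduce to Lemma \ref{L:a_cvx<=>a=c_1(a)} and show $c_1(A\cap B)\subset A\cap B$ by observing that a witnessing pair $x,y\in A\cap B$ for $z$ works simultaneously for $A$ and for $B$. Your closing remark that the argument extends verbatim to arbitrary intersections is also accurate.
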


\begin{proof}
Let $A$ and $B$ be convex. Then by Lemma \ref{L:a_cvx<=>a=c_1(a)} $A=c_1(A)$ and $B=c_1(B)$.  We will show that $c_1(A\cap B)= c_1(A)\cap c_1(B)=A\cap B$.  We've already noted that $A\cap B\subset c_1(A\cap B)$.

Suppose that $z\in c_1(A\cap B)$.  Then there exists $x,y\in A\cap B$, such that $d(x,y)=d(x,z) + d(z,y)$.  Hence $z\in c_1(A)$ and $z\in c_1(B)$, that is, $z\in c_1(A)\cap c_1(B)$.  As $A=c_1(A)$ and $B=c_1(B)$, we now have $z\in c_1(A)\cap c_1(B) = A\cap B$.  Therefore $c_1(A\cap B)\subset A\cap B$.  Thus $A\cap B = c_1(A\cap B)$ and so $A\cap B$ is convex.
\end{proof}

\begin{prp}Let $I$ be an ordered set and take $\{A_\alpha\}_{\alpha\in I}$ to be a collection of convex sets in $X$ where $A_\alpha\subset A_\beta$ whenever $\alpha < \beta$ and $\alpha,\beta\in I$. The set formed by taking the union of $A_\alpha$ for $\alpha \in I$ is convex.
\end{prp}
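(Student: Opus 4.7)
My plan is to invoke Lemma \ref{L:a_cvx<=>a=c_1(a)} and show directly that $U := \bigcup_{\alpha \in I} A_\alpha$ satisfies $c_1(U) = U$. The inclusion $U \subset c_1(U)$ is automatic from the general fact $A \subset c_1(A)$, so the content is in the reverse inclusion.

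To prove $c_1(U) \subset U$, I would take an arbitrary $z \in c_1(U)$ and unwind the definition: there exist $x, y \in U$ with $d(x,y) = d(x,z) + d(z,y)$. By definition of the union, $x \in A_\alpha$ and $y \in A_\beta$ for some $\alpha, \beta \in I$. This is the step where the chain hypothesis does its work: since $I$ is ordered and the family is nested, we may assume without loss of generality that $\alpha \le \beta$, and then $A_\alpha \subset A_\beta$ gives $x, y \in A_\beta$ simultaneously. Since $A_\beta$ is convex, Lemma \ref{L:a_cvx<=>a=c_1(a)} yields $z \in c_1(A_\beta) = A_\beta \subset U$, as desired. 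Then $U = c_1(U)$, so $U$ is convex by the same lemma.

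I do not expect any serious obstacle here; the only subtle point is recognizing that one cannot just pick arbitrary $\alpha, \beta$ for $x$ and $y$ and hope both points land in a common convex set, which would fail for an arbitrary family of convex sets. The hypothesis that the collection is totally ordered by inclusion is precisely what lets us replace the pair $(A_\alpha, A_\beta)$ by the single convex set $A_{\max(\alpha,\beta)}$ containing both $x$ and $y$, after which convexity of that single set finishes the argument.
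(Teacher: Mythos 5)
Your proposal is correct and follows essentially the same route as the paper: reduce to showing $c_1\bigl(\bigcup_\alpha A_\alpha\bigr)=\bigcup_\alpha A_\alpha$ via Lemma \ref{L:a_cvx<=>a=c_1(a)}, and use the nesting hypothesis to place the two witnesses $x,y$ in a single convex $A_\beta$. No gaps; this matches the paper's argument step for step.
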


\begin{proof}
We must show that $\cup A_\alpha$ is convex.
Consider the set $c_1(\cup A_\alpha)$.  For any $z\in c_1(\cup A_\alpha)$, we can find $x,y\in \cup A_\alpha$ so that $d(x,y)=d(x,z)+d(z,y)$. However $x,y \in  \cup A_\alpha$ implies that $x \in A_\alpha$ and $y \in  A_\beta$ for some $\alpha, \beta \in I$.  Without loss of generality we assume that $\alpha < \beta$.  By hypothesis, $A_\alpha\subset A_\beta$.  Hence $x,y\in A_\beta$.  Since $z$ satisfies $d(x,y)=d(x,z)+d(z,y)$ for $x,y\in A_\beta$ with $A_\beta$ convex, we see that $z\in c_1(A_\beta)=A_\beta$.  As $z$ was arbitrarily chosen from $c_1(\cup A_\alpha)$, we have $c_1(\cup A_\alpha)\subset \cup A_\alpha$.

By construction the reverse inclusion $\cup A_\alpha\subset c_1(\cup A_\alpha)$ is immediate.  Hence $c_1(\cup A_\alpha)= \cup A_\alpha$.  Recall, Lemma 1,that a set $A$ is convex if and only if $A=c_1(A)$.  Thus $\cup A_\alpha$ is convex.
\end{proof}

\begin{df}
Let $A$ be a convex set. A function $f\colon A\rightarrow \mathbb{R}$ is \emph{convex at the point} $z\in A$ if
\[f(z) \le \frac{d(y,z)}{d(x,y)}f(x)+\frac{d(x,z)}{d(x,y)}f(y)\]
whenever $z$ is in between $x, y\in A$, i.e. $d(x,y)=d(x,z)+d(z,y)$.  A function is said to be \emph{convex on} $A$ if it is convex at every point in $A$.  Furthermore, a function is simply called \emph{convex} when it is convex on the entire set $X$.
\end{df}

The vertices of a graph admit a natural metric defined as the length of the shortest path between them.  With this, the notions of convex and convex functions extend naturally to all graphs, see \cite{CMOP05, FJ86, S83, S91}. 

\subsection{Subharmonic functions on a graph}
Introductions to various aspects of the theory can be found in \cite{BLS07, K05, S94, W94}.

Consider a graph $G$.  The vertices of this graph will be denoted $X$ (to stay consistent with above), which shall be the domain of our (sub)harmonic functions.  A function $f\colon X \rightarrow \mathbb{R}$ is said to be \emph{harmonic} at $x\in X$ if
\[f(x) = \frac{1}{\deg(x)}\sum_{y\sim x}f(y)\]
and {subharmonic} at $x\in X$ if
\[f(x) \le \frac{1}{\deg(x)}\sum_{y\sim x}f(y)\]
where $\deg(x)$ denotes the degree of $x$ and $y\sim x$ means that $y$ is adjacent to $x$.  A function is (sub)harmonic if it is (sub)harmonic at every point $x\in X$. Observe that constant functions are always harmonic (thereby subharmonic too), and so these classes of functions are never empty.

\begin{lma}
If the graph $X$ is connected, regular of degree two and triangle free, then a subharmonicity is the same as convexity.
\end{lma}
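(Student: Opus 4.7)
The plan is to first pin down the very restricted structure of the hypothesized graph. A connected, $2$-regular graph is either a finite cycle $C_n$ or the bi-infinite path $\mathbb{Z}$; the triangle-free hypothesis just rules out $C_3$, so $n\ge 4$ (or $X$ is the two-sided infinite path). The key metric consequence of the hypothesis is that for any vertex $x$, its two neighbors $y_1,y_2$ satisfy $d(y_1,y_2)=2$: they cannot coincide (distinct neighbors), and they cannot be adjacent (that would create the triangle $xy_1y_2$), so the path $y_1xy_2$ is already a geodesic. Moreover, along the unique two-sided geodesic ray through $x$, distances add as in $\mathbb{Z}$ (subject to the diameter bound $\lfloor n/2\rfloor$ in the cyclic case).

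For the direction convexity $\Rightarrow$ subharmonicity I would apply the definition of convex at $x$ to the pair $(y_1,y_2)$ of the two neighbors. Since $x$ is in between $y_1$ and $y_2$ and $d(y_1,y_2)=d(y_1,x)+d(x,y_2)=2$, the coefficients $d(y_i,x)/d(y_1,y_2)$ both equal $1/2$, and the convexity inequality
\[f(x)\le \tfrac{d(x,y_2)}{d(y_1,y_2)}f(y_1)+\tfrac{d(x,y_1)}{d(y_1,y_2)}f(y_2)=\tfrac12\bigl(f(y_1)+f(y_2)\bigr)\]
is exactly subharmonicity at $x$ (using $\deg(x)=2$).

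For the reverse direction I would argue vertex by vertex. Fix $x$ and a pair $u,v$ with $x$ in between; write $k=d(u,x)$, $m=d(x,v)$. Because the graph is $2$-regular, triangle-free, and connected, the geodesic realizing $d(u,v)=k+m$ is unique and passes through $x$; label the vertices along it $x_{-k}=u,\dots,x_0=x,\dots,x_m=v$ with $x_{i-1}\sim x_i$. Subharmonicity at each interior $x_i$ reads $f(x_i)\le \tfrac12(f(x_{i-1})+f(x_{i+1}))$, i.e.\ the second differences of the sequence $i\mapsto f(x_i)$ are nonnegative. This is the classical discrete one-dimensional convexity condition, and a standard telescoping summation of the resulting non-decreasing first differences yields
\[f(x_0)\le \tfrac{m}{k+m}f(x_{-k})+\tfrac{k}{k+m}f(x_m),\]
which is precisely the convexity inequality for $(u,v)$ at $x$.

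The only genuinely non-routine step is this last summation argument, and even that is standard; the main care needed is to (i) justify that the geodesic through $x$ is unique so that we may speak of a well-defined restricted sequence, and (ii) in the cyclic case verify that the indices $i=-k,\dots,m$ are all distinct, which follows from $k+m=d(u,v)\le \lfloor n/2\rfloor$. With those two observations in hand, the equivalence of the two notions drops out of the one-dimensional discrete convexity lemma.
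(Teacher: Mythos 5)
Your proof is correct, and it is in fact more complete than the paper's. The paper's argument is a one-liner: since $z$ has exactly two neighbors $y_1,y_2$ and triangle-freeness gives $d(y_1,y_2)=2$, the subharmonic average $\tfrac12\bigl(f(y_1)+f(y_2)\bigr)$ coincides with the convex combination $\tfrac{d(y_2,z)}{d(y_1,y_2)}f(y_1)+\tfrac{d(y_1,z)}{d(y_1,y_2)}f(y_2)$, and the two defining inequalities are then declared identical. That computation settles convexity $\Rightarrow$ subharmonicity (your first direction is the same argument), but it implicitly treats convexity at $z$ as if it only involved the neighbor pair, whereas the paper's definition quantifies over \emph{all} pairs $x,y$ with $z$ in between, possibly far apart. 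Your classification of the graph as $C_n$ with $n\ge 4$ or the two-sided infinite path, together with the telescoping of nonnegative second differences (equivalently, nondecreasing first differences) along a geodesic through $z$, is exactly what is needed to make the direction subharmonic $\Rightarrow$ convex rigorous, so your route buys a complete proof of the equivalence where the paper essentially only verifies the neighbor-pair case. One small correction: the geodesic from $u$ to $v$ need not be unique (antipodal pairs on an even cycle admit two), but this is harmless --- since $d(u,x)+d(x,v)=d(u,v)$, concatenating geodesics from $u$ to $x$ and from $x$ to $v$ produces a geodesic through $x$, and your summation only needs one such path, along which each interior vertex has its two path-neighbors as its only graph-neighbors, so subharmonicity there is precisely the second-difference inequality you use.
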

\begin{proof}
Each vertex $z$ has only two neighbors $x,y$.  As the graph is triangle free $d(x,y)=2$.  Hence 
\[\frac{1}{deg(z)}\sum_{\zeta\sim z}f(\zeta) = \frac{1}{2}\left(f(x)+f(y)\right) = \frac{d(y,z)}{d(x,y)}f(x)+\frac{d(x,z)}{d(x,y)}f(y)\]

By definition $f$ is subharmonic at $z$ if $f(z)$ is less than or equal to the left side of the equation above and $f$ is convex at $z$ if $f(z)$ is less that or equal to the right side of the equation above.  Therefore subharmonicity and convexity are equivalent conditions.
\end{proof}

We will also use a standard modification of the definition of subharmonic functions on graphs to allow for positive edge weights. Namely, a function $f\colon X\rightarrow \mathbb{R}$ is subharmonic at $x$ if
\[0\le \sum_{y\sim x} e(x,y)[f(y)-f(x)],\]
which with some arithmetic becomes
\[f(x)\le \frac{1}{M_x}\sum_{y\sim x} e(x,y)f(y),\]
where $e(x,y)=e(y,x)\ge 0$ is the edge weight and $M_x=\sum_{y\sim x} e(x,y)$.  If the edge weights are all taken to be one, then this definition is identical to the first.

\section{The distance is given by the graph metric.}\label{S:graph-metric}

In this section we provide two simple theorems which show that for a large class of graphs, convex functions are indeed subharmonic.

\begin{thm}
Let $z$ be a point in $X$.  Suppose that $\deg(z)>1$ and that $z$ is not part of any triangle.  If $f$ is convex at $z$, then $f$ is subharmonic at $z$.  Consequently, if the graph has no triangles or vertices of degree less than $2$, then every convex function is subharmonic.
\end{thm}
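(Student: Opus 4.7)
The plan is to exploit the absence of triangles through $z$ to identify $z$ as lying between every pair of its neighbors, and then to average the resulting convexity inequalities.

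First, I would observe the key geometric fact: because $\deg(z) > 1$, the point $z$ has at least two distinct neighbors, and for any two distinct neighbors $x,y$ of $z$, the absence of a triangle at $z$ forces $x \not\sim y$ in the graph, so $d(x,y) \ge 2$. On the other hand, the triangle inequality gives $d(x,y) \le d(x,z) + d(z,y) = 1 + 1 = 2$. Therefore $d(x,y) = 2 = d(x,z) + d(z,y)$, which means $z$ lies in between $x$ and $y$ in the sense of the preceding definition.

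Next, I would invoke convexity at $z$ for each such pair. Since $d(x,z) = d(z,y) = 1$ and $d(x,y) = 2$, the convexity inequality collapses to
\[
f(z) \le \tfrac{1}{2} f(x) + \tfrac{1}{2} f(y)
\]
for every unordered pair $\{x,y\}$ of distinct neighbors of $z$.

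The final step is a counting argument. Let $n = \deg(z)$ and enumerate the neighbors as $y_1, \ldots, y_n$. Summing the inequality above over all $\binom{n}{2}$ unordered pairs, and noting that each $f(y_k)$ appears in exactly $n-1$ pairs, yields
\[
\binom{n}{2} f(z) \;\le\; \tfrac{1}{2}(n-1) \sum_{k=1}^{n} f(y_k).
\]
Dividing by $\binom{n}{2} = n(n-1)/2$ gives $f(z) \le \frac{1}{n}\sum_{y \sim z} f(y)$, which is subharmonicity at $z$. The ``consequently'' clause is then immediate: applying this pointwise at every vertex produces subharmonicity everywhere.

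The only thing that could plausibly be called an obstacle is bookkeeping for the case $\deg(z) = 2$, but there the pairing reduces to the single pair of neighbors and the averaging is trivial; no separate argument is needed. So I expect no genuine difficulty, and the proof should be short.
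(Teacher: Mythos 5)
Your proof is correct and follows essentially the same route as the paper: identify $z$ as lying between every pair of its (necessarily non-adjacent) neighbors, apply the convexity inequality $2f(z) \le f(y_1)+f(y_2)$ to each pair, and sum over all $\binom{\deg(z)}{2}$ pairs, with each neighbor counted $\deg(z)-1$ times. Your explicit triangle-inequality justification that $d(x,y)=2$ is a small added nicety over the paper's statement of the same fact.
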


\begin{proof}
Let $B=\{y\in X\colon y\sim z\}$ be all the vertices adjacent to $z$.  By hypothesis $\deg(z)=|B|>1$, and so there are at least two vertices $y_1, y_2\in B$.  As $z$ is adjacent to both $y_1$ and $y_2$ and as $z$ is assumed to not be apart of a triangle, $y_1$ is not adjacent to $y_2$.  Hence $z$ is in between $y_1$ and $y_2$, that is, on a geodesic connecting $y_1$ and $y_2$.  In fact, $2=d(y_1, y_2)=d(y_1,z)+d(z,y_2)$ with $d(y_1,z)=d(z,y_2)=1$.  Hence for all $y_1, y_2\in B$ we have
\begin{equation}\label{E:basic}
2f(z) \le f(y_1)+f(y_2)
\end{equation}
by convexity.

Now we sum Equation (\ref{E:basic}) over all unordered pairs of points $y_1, y_2\in B$.  Naturally there are $\binom{\deg(z)}{2}$ such pairs and each vertex $y\in B$ will appear precisely $\deg(z)-1$ times.  (Recall $B=\{y\colon y\sim z\}$ and so $|B| = \deg(z)$.)  Hence
\[\binom{\deg(z)}{2} 2f(z) \le (\deg(z)-1)\sum_{y\sim z}f(y),\]
which simplifies to
\[f(z) \le \frac{1}{\deg(x)}\sum_{y\sim z}f(y).\]
Thus $f$ is subharmonic at $z$.
\end{proof}

Furthermore,
\begin{thm}
Let $z$ be a point in $X$.  If the neighbors of $z$ can be partitioned into pairs such that the vertices in each pair are non-adjacent then a function is convex at $z$ implies that it is also subharmonic at $z$.
\end{thm}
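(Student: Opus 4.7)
The plan is to run the same argument as in the preceding theorem, but replace the combinatorial counting over all unordered pairs with a simpler sum over just the pairs appearing in the given partition. The hypothesis is in fact tailor-made for this: the partition gives us exactly the inequalities needed, one per pair, and the partition property ensures each neighbor is counted exactly once.

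First, I would fix a partition $\{\{y_1^{(i)}, y_2^{(i)}\}\}_{i=1}^{k}$ of the neighbors of $z$, where $k = \deg(z)/2$ (note the hypothesis tacitly requires $\deg(z)$ to be even). For each pair, since $y_1^{(i)} \sim z$ and $y_2^{(i)} \sim z$ but by assumption $y_1^{(i)} \not\sim y_2^{(i)}$, the shortest path from $y_1^{(i)}$ to $y_2^{(i)}$ has length $2$, so
\[
d(y_1^{(i)}, y_2^{(i)}) = 2 = 1 + 1 = d(y_1^{(i)}, z) + d(z, y_2^{(i)}).
\]
Hence $z$ is in between $y_1^{(i)}$ and $y_2^{(i)}$.

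Next, I would apply the convexity of $f$ at $z$ to each pair, exactly as in the derivation of Equation (\ref{E:basic}) in the previous proof. Since $d(y_1^{(i)}, z) = d(z, y_2^{(i)}) = 1$ and $d(y_1^{(i)}, y_2^{(i)}) = 2$, convexity gives
\[
f(z) \le \tfrac{1}{2} f(y_1^{(i)}) + \tfrac{1}{2} f(y_2^{(i)}),
\]
i.e. $2 f(z) \le f(y_1^{(i)}) + f(y_2^{(i)})$.

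Finally, I would sum these $k$ inequalities. Because the pairs form a partition of the neighborhood of $z$, each $y \sim z$ appears in exactly one pair, so the right-hand side telescopes to $\sum_{y \sim z} f(y)$, while the left-hand side is $2k\, f(z) = \deg(z)\, f(z)$. Dividing by $\deg(z)$ yields the subharmonicity inequality at $z$. There is no real obstacle here; the hypothesis has been arranged so that the partition structure trivializes the counting step that required a binomial coefficient manipulation in the previous theorem.
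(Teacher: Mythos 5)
Your proof is correct and is essentially the paper's own argument: for each pair in the partition, non-adjacency forces $d(y_1^{(i)},y_2^{(i)})=2$, so convexity at $z$ gives $2f(z)\le f(y_1^{(i)})+f(y_2^{(i)})$, and summing over the $\deg(z)/2$ pairs yields $\deg(z)\,f(z)\le \sum_{y\sim z}f(y)$. Your remark that the hypothesis tacitly forces $\deg(z)$ to be even is a fair observation, but the route is the same as in the paper.
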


\begin{proof}
For any vertices $y_1, y_2$ in a pairing of the partition of the neighbors of $z$ are non-adjacent, the vertex $z$ must be between them, and hence
\[2f(z) \le f(y_1)+f(y_2)\]
for any function $f$ subharmonic at $z$.  Consequently if we sum this inequality over all $\deg(z)/2$ pairings, we have
\[2 \frac{\deg(z)}{2} f(z) \le \sum_{y\sim z}f(y).\]
Therefore $f$ is subharmonic at $z$.
\end{proof}

Notice that for the standard square lattice both theorems imply that a convex function is subharmonic. If $z$ was connected to an odd number of non-adjacent points then only the first theorem implies that a function convex at $z$ is subharmonic at $z$.  Similarly when the graph is the standard triangular tiling of the plane, only the second theorem would show that every convex function is subharmonic.

\begin{thm}
Let $F$ be any subset of $X$.  If the distance function 
\[d(\cdot, F):=\inf\left\{ d(\cdot, f) \colon f\in F \right\}\]
 is convex, then $F$ is convex.
\end{thm}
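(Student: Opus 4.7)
The plan is to apply Lemma \ref{L:a_cvx<=>a=c_1(a)}, which reduces the problem to showing $c_1(F) \subseteq F$. I would dispose of the trivial case $F = \emptyset$ first and then pick an arbitrary $z \in c_1(F)$, producing by definition points $x, y \in F$ with $d(x,y) = d(x,z) + d(z,y)$. The goal is then to show $z \in F$, after which convexity of $F$ follows.

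The key observation is that $x, y \in F$ forces $d(x, F) = d(y, F) = 0$. If $x = y$, then $d(x,z) + d(z,x) = 0$ forces $z = x \in F$ immediately. Otherwise $d(x,y) > 0$, so the hypothesis that $d(\cdot, F)$ is convex at $z$, with $z$ in between $x$ and $y$, gives
\[d(z, F) \le \frac{d(y,z)}{d(x,y)} d(x, F) + \frac{d(x,z)}{d(x,y)} d(y, F) = 0,\]
and combining with $d(z,F) \ge 0$ yields $d(z,F) = 0$.

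The main obstacle, and the reason the ambient hypothesis of this section matters, is passing from $d(z,F) = 0$ to the conclusion $z \in F$. In a general metric space this only places $z$ in the closure of $F$, which is not enough. However, in this section $d$ is the graph metric, so the set $\{d(z,f) : f \in F\}$ consists of nonnegative integers; its infimum is $0$ only if the value $0$ is actually attained, i.e.\ only if $z = f$ for some $f \in F$. This yields $z \in F$, establishing $c_1(F) \subseteq F$ and hence the convexity of $F$ by Lemma \ref{L:a_cvx<=>a=c_1(a)}.
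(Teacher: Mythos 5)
Your proof is correct and follows essentially the same route as the paper: apply convexity of $d(\cdot,F)$ at a point $z$ in between $x,y\in F$, use $d(x,F)=d(y,F)=0$ to force $d(z,F)=0$, and conclude $z\in F$. The only differences are cosmetic extra care --- the reduction via Lemma \ref{L:a_cvx<=>a=c_1(a)}, the degenerate case $x=y$, and the observation that in the graph metric the infimum defining $d(z,F)$ is attained --- points the paper's shorter proof leaves implicit.
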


\begin{proof}
Consider any point $z\in X$ that lies between $x,y\in F$.  If the distance function is convex, we have
\[0\le d(z,F) \le \frac{d(y,z)}{d(x,y)}d(x,F)+\frac{d(x,z)}{d(x,y)}d(y,F),\]
but $d(x,F)=d(y,F)=0$ as $x,y\in F$.  Therefore $d(z,F)=0$ and so $z$ must also be a point in $F$.
\end{proof}

\begin{ex}
\label{Ex:cycle}
Consider a cycle on four vertices, i.e. $X=\{a,x,y,z\}$ with $a\sim x, x\sim y, y\sim z, z\sim a$. One would easily believe that $F=\{a\}$ is convex.  Hence $d(x,F)=d(z,F)=1$, and $y$ is in between $x$ and $z$.  However
\[2 = d(y,a) \not\le \frac{1}{2} d(x,a) +\frac{1}{2} d(z,a) = 1.\]
Hence $d(\cdot, a)$ is not convex, and certainly not subharmonic.

Observe also the set $\{x,y,z\}$ is NOT convex.  We believe this reveals part of the problem with this definition of convexity.  Namely that a geodesic line segment need not be convex.  It seems that `few' graphs have convex geodesics.  (However $X=\mathbb{Z}$ with $x\sim y$ when $|x-y|=1$, and the standard triangular tiling of the plane are two such.)
\end{ex}

It would seem that more structure is needed to have a workable theory.

\section{Graphs over a normed abelian group.} \label{S:background_metric}
For the remainder of this paper, we consider weighted graphs where the vertex set $X$ is a normed abelian group, and the graph is compatible with the norm. We will denote the norm $||\cdot||$.  Recall that the graph structure is \emph{compatible with the norm} if there is a constant $r>0$ such that $x\sim y$ if and only if $||x-y||\le r$ and the edge weights are given by the norm $e(x,y)=||x-y||\le r$.

In particular, graphs of this type include all lattice graphs.  By rescaling $X$ by $r$ we can always assume without loss of generality that $r=1$.

Graphs of this type pick up a number of traits from analysis.  Far from the least important is a local similarity property.  When one does analysis in a domain $D\subset \mathbb{R}^n$ (or on a manifold) every point $z\in D$ has a neighborhood which is locally like a ball in $\mathbb{R}^n$.  We see the same property here.

This can also be viewed as a translation invariance property, we could translate any point $x_0$ to the origin by taking $X\mapsto X-x_0$ and nothing would change. More explicitly, we denote $B_r(x_0):=\{y\in X:y\sim x_0\}$ and for every $x_0$ in $X$ there is a simple 1-1 correspondence between $B_r(x_0)$ and $B_r(0)$.  If $y\in B_r(x_0)$, then $z=y-x_0\in B_r(0)$, and if $z\in B_r(0)$, then $x_0+z\in B_r(x_0)$.

Furthermore, if $\zeta\in B_r(0)$, then $-\zeta\in B_r(0)$.  Hence
\begin{equation}\label{E:sim_to_zero}
\{y\in X:y\sim x\}:=B_r(x)=\{x+\zeta\colon \zeta\in B_r(0)\}=\{x-\zeta\colon \zeta\in B_r(0)\}
\end{equation}

We maintain the same notion of a convex function, namely
\[||x-y||f(z)\le ||y-z||f(x)+||x-z||f(y),\]
whenever $||x-y||=||x-z||+||z-y||$.  However in this context we can work with midpoints.

In \cite{K96}, Kiselman defines a function $f$ on an abelian group $X$ to be \emph{midpoint convex} if
\[f(x) \le \frac{1}{2}f(x+z)+\frac{1}{2}f(x-z)\]
for all $x$ and $z$ in $X$.  (Actually he uses the notion of upper addition to for functions defined on the extended real line, i.e. $\mathbb{R}\cup\{\pm\infty\}$, but we will not be needing such subtleties here.)  Trivially a convex function is always midpoint convex.

We will now see that this notion of midpoint convexity allows us to achieve our goals.

\begin{thm}\label{T:cvx=>sub}
Consider a weighted graph where the vertex set $X$ is a normed abelian group and the graph is compatible with the norm.  Every midpoint convex function is subharmonic.
\end{thm}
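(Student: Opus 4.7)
The plan is to exploit the central symmetry of the neighborhood of any vertex $x$, provided by equation (\ref{E:sim_to_zero}): the map $\zeta \mapsto -\zeta$ is an involution on $N := B_r(0) \setminus \{0\}$, and it preserves the norm since $\|-\zeta\| = \|\zeta\|$. This pairs each neighbor $x+\zeta$ of $x$ with the antipodal neighbor $x-\zeta$, and midpoint convexity is precisely the inequality needed for each such pair.

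First, I would rewrite the weighted subharmonicity condition using (\ref{E:sim_to_zero}) to reindex the sum over neighbors by $\zeta \in N$. Since $e(x, x+\zeta) = \|\zeta\|$ and the map $\zeta \mapsto x+\zeta$ is a bijection from $N$ to $\{y : y \sim x\}$,
\[
\sum_{y \sim x} e(x,y)\, f(y) = \sum_{\zeta \in N} \|\zeta\|\, f(x+\zeta),
\qquad
M_x = \sum_{\zeta \in N} \|\zeta\|.
\]
Applying the involution $\zeta \mapsto -\zeta$ to this sum (and using $\|-\zeta\| = \|\zeta\|$), then averaging with the original expression, gives the symmetrized form
\[
\sum_{\zeta \in N} \|\zeta\|\, f(x+\zeta) \;=\; \frac{1}{2}\sum_{\zeta \in N} \|\zeta\|\bigl[f(x+\zeta) + f(x-\zeta)\bigr].
\]

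Finally, I would apply midpoint convexity termwise: $f(x+\zeta)+f(x-\zeta) \ge 2f(x)$ for every $\zeta \in N$. Multiplying by the nonnegative weight $\|\zeta\|$ and summing yields
\[
\sum_{\zeta \in N}\|\zeta\|\, f(x+\zeta) \;\ge\; f(x)\sum_{\zeta \in N} \|\zeta\| \;=\; M_x f(x),
\]
which is exactly the subharmonicity inequality at $x$. Since $x$ was arbitrary, $f$ is subharmonic. I do not expect a serious obstacle; the argument is essentially a single symmetry computation enabled by (\ref{E:sim_to_zero}). The only points to handle carefully are excluding $\zeta=0$ from the index set (so that $y \sim x$ refers to genuine neighbors, not $x$ itself) and noting that Kiselman's midpoint convexity hypothesis — stated for all $z \in X$ — certainly applies to the particular $z = \zeta \in N$ used here.
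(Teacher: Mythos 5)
Your argument is correct and is essentially the paper's own proof: both use the symmetry of $B_r(0)$ under $\zeta\mapsto-\zeta$ (Equation \ref{E:sim_to_zero}) to symmetrize the weighted neighbor sum and then apply midpoint convexity termwise, yielding $M_x f(x)\le\sum_{y\sim x}e(x,y)f(y)$. Your explicit remarks about $e(x,x+\zeta)=\|\zeta\|=\|-\zeta\|$ and about excluding $\zeta=0$ are harmless refinements of the same computation.
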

\begin{proof}
Pick any $x\in X$.  Observe that by Equation \ref{E:sim_to_zero}
\begin{align*}\sum_{y\sim x} e(x,y) f(y) &= \frac{1}{2}\sum_{z\in B_r(0)} e(x,x+z) f(x+z) + \frac{1}{2}\sum_{z\in B_r(0)} e(x,x-z) f(x-z) \\
 &= \sum_{z\in B_r(0)} e(x,x+z) \left(\frac{1}{2}f(x+z)+\frac{1}{2}f(x-z)\right).
\end{align*}
Hence by (midpoint) convexity
\[f(x)M_x=f(x)\sum_{z\in B_r(0)}e(x,x+z) \le \sum_{y\sim x} e(x,y) f(y),\]
which shows that $f$ is subharmonic at $x$.
\end{proof}

A set $A\subset X$ is called \emph{convex} if the function
\[\chi_A(x)=\begin{cases} 0 &\colon x\in A, \\+\infty &\colon x\in X\setminus A,\end{cases}\]
is convex, or, equivalently, if $z\in A$ whenever there exists $x,y\in A$ such that $||x-y||=||x-z||+||z-y||$.  This again easily implies midpoint convexity, i.e. if $z\in A$ whenever there is an $x\in X$ such that both $z+x$ and $z-x$ are in $A$

\begin{prp}\label{P:dist-cvx=>cvx}
Let $F$ be any subset of $X$.  If the distance function $d(x,F)=\inf\{||x-y||\colon y\in F\}$ is convex, then the set $F$ is convex.
\end{prp}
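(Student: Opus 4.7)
The plan is to mirror the argument given earlier for the analogous statement in Section \ref{S:graph-metric} (the theorem stating that a convex distance function forces $F$ to be convex). By the definition of convexity of a set given in the paragraph immediately preceding the proposition, showing $F$ is convex amounts to verifying: whenever $x, y \in F$ and $z \in X$ satisfies $||x-y|| = ||x-z|| + ||z-y||$, we must have $z \in F$. So I would fix such a triple $x,y,z$ and aim to deduce $z \in F$.

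The key step is to evaluate the convexity inequality for $d(\cdot, F)$ at the point $z$, which is by hypothesis between $x$ and $y$. This yields
\[ 0 \le d(z, F) \le \frac{||y-z||}{||x-y||} d(x, F) + \frac{||x-z||}{||x-y||} d(y, F). \]
Because $x, y \in F$, both $d(x, F) = 0$ and $d(y, F) = 0$, so the right-hand side vanishes and we conclude $d(z, F) = 0$.

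The step requiring a bit of care is the passage from $d(z, F) = 0$ back to $z \in F$; unlike in continuous settings one cannot invoke closedness of $F$. Here I would appeal to the standing assumptions of Section \ref{S:background_metric}: the graph is locally finite and compatible with the norm, i.e., $x \sim y$ iff $||x - y|| \le r$. Local finiteness forces $B_r(z)$ to be a finite set, so $\{||z - y|| : y \in X, \, y \ne z\}$ has a strictly positive infimum. Thus $d(z, F) = 0$ can only occur if the infimum is attained at $y = z$, giving $z \in F$. I expect this final justification to be the only nontrivial point, since the main chain of inequalities is a direct translation of the Section \ref{S:graph-metric} argument into the norm-compatible setting.
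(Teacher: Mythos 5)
Your proposal is correct, but it takes a genuinely different route from the paper's. You argue with the full betweenness form of convexity: for $x,y\in F$ and $z$ with $||x-y||=||x-z||+||z-y||$ you apply the convexity inequality for $d(\cdot,F)$ at $z$ to get $d(z,F)=0$, and then use the standing local finiteness assumption of Section \ref{S:fund-conp} together with compatibility of the graph with the norm to see that $\inf\{||z-y||\colon y\in X,\ y\ne z\}>0$, so that $d(z,F)=0$ forces $z\in F$. The paper instead works entirely with midpoints: it uses only midpoint convexity of $d(\cdot,F)$ and shows that $x\pm z\in F$ forces $x\in F$, i.e.\ it establishes the midpoint form of convexity of $F$, and it asserts the final step ``$d(x,F)=0$, hence $x\in F$'' without comment. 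Your version thus proves the proposition exactly as stated (with the betweenness definition of a convex set given just before it), and your explicit justification of the last step is a genuine improvement: without local finiteness the implication can fail (a set $F$ accumulating at $z$ gives $d(z,F)=0$ with $z\notin F$), so the point deserves the attention you give it. What the paper's midpoint route buys is a weaker hypothesis --- only midpoint convexity of the distance function is needed --- which fits the midpoint-centric framework of Section \ref{S:background_metric} and its companion results (Lemma \ref{L:dist-to-a-point=cvx}, Proposition \ref{P:cvx=>dist-cvx}). One small cosmetic point: your fractional form of the convexity inequality presupposes $x\ne y$; the degenerate case $x=y$ is immediate, since then $||x-z||=0$ and $z=x\in F$, but it merits a clause (or use the cleared-denominator form $||x-y||\,f(z)\le ||y-z||\,f(x)+||x-z||\,f(y)$ as in Section \ref{S:background_metric}).
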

\begin{proof}
Let $x\in X$ so that there is some $z\in X$ with $x\pm z\in F$.  Then by midpoint convexity
\[0\le d(x,F) \le \frac{1}{2} d(x+z,F) + \frac{1}{2} d(x-z,F) = 0.\]
Thus $d(x,F)=0$ and so $x\in F$.
\end{proof}

Notice for that for the simple case $F=\{a\}$ we get the converse of the previous result.
\begin{lma}\label{L:dist-to-a-point=cvx}
For any fixed $a\in X$, the function $f(z) = ||z-a||$ is midpoint convex.
\end{lma}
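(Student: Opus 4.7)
The plan is to verify the midpoint convexity inequality
\[\|x - a\| \le \tfrac{1}{2}\|(x+z) - a\| + \tfrac{1}{2}\|(x-z) - a\|\]
for arbitrary $x, z \in X$ by a direct appeal to the triangle inequality in the normed abelian group $X$.

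First I would rewrite the claim, after clearing the factor $\tfrac{1}{2}$, as $2\|x-a\| \le \|(x+z)-a\| + \|(x-z)-a\|$. Setting $u = x-a$, this reads $2\|u\| \le \|u+z\| + \|u-z\|$. The key algebraic observation is $(u+z) + (u-z) = 2u$ in the abelian group; feeding this into the triangle inequality gives
\[\|2u\| \;=\; \|(u+z) + (u-z)\| \;\le\; \|u+z\| + \|u-z\|.\]
Invoking the scaling identity $\|2u\| = 2\|u\|$ on the left-hand side and substituting back $u = x-a$ then yields the desired midpoint convexity.

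The main (and essentially only) obstacle is the use of $\|2w\| = 2\|w\|$. In an arbitrary normed abelian group only the subadditive direction $\|2w\| \le 2\|w\|$ is automatic from the triangle inequality (indeed, on $\mathbb{Z}/3$ with the obvious norm the identity fails, and in that example even the conclusion of the lemma fails). However, the setting of Section \ref{S:background_metric} is tailored to lattice-like graphs sitting inside normed real vector spaces, where this scaling identity is built in, so this is a standing assumption rather than a genuine difficulty. Modulo that, the whole proof reduces to one application of the triangle inequality to the identity $(u+z)+(u-z)=2u$.
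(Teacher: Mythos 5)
Your argument is correct and is essentially the paper's own proof: the paper likewise writes $2\|x-a\|=\|2(x-a)\|=\|((x-a)-z)+((x-a)+z)\|$ and applies the triangle inequality, which is exactly your computation with $u=x-a$. Your caveat about the scaling step $\|2u\|=2\|u\|$ is well taken, since the paper invokes that identity silently as well; it holds in the intended lattice-type examples (e.g.\ subgroups of normed real vector spaces) but not in an arbitrary normed abelian group, so it is best stated as a standing assumption exactly as you do.
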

\begin{proof}
This follows immediately from the triangle inequality on the norm.  Indeed, for any $x,y,z\in X$ with $||x-y||=||x-z||+||z-y||$ we have
%
\begin{align*}
2f(x) & = 2||x-a|| = ||2(x-a)|| \\
& = ||(x-a)-z + (x-a)+z|| \\
& \le ||(x-a)-z || + ||(x-a)+z|| \\
& = f(x-z)+f(x+z). \qedhere
\end{align*}

\end{proof}

Of course, the minimum of a two convex functions is in general not convex, which is perhaps one reason why the following  result is interesting.

However in general the classical proofs heavily rely upon the fact that for any point $x$ and convex set $F$ there is always a  unique nearest neighbor $y\in F$ to $x$.

\begin{df}
We say that a set $F$ has the \emph{nearest neighbor} property if for all $y_1, y_2\in F$ and $z\in X$ there exists a $y\in F$ (possibly $y_1$ or $y_2$) such that
\[2 ||y-z|| \le ||y_1+y_2 -2z||.\]
\end{df}

\begin{prp}\label{P:cvx=>dist-cvx}
If $F$ is a convex subset of $X$ with the nearest neighbor property, then the distance function $d(\cdot,F)$ is midpoint convex (and hence subharmonic).
\end{prp}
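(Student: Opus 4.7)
The plan is to establish midpoint convexity directly from the nearest neighbor property, and then invoke Theorem \ref{T:cvx=>sub} to get subharmonicity. Fix arbitrary $x, z \in X$; I need to show
\[ d(x,F) \le \tfrac{1}{2} d(x+z,F) + \tfrac{1}{2} d(x-z,F). \]
I would begin by choosing $y_1 \in F$ with $\|y_1 - (x+z)\| = d(x+z,F)$ and $y_2 \in F$ with $\|y_2 - (x-z)\| = d(x-z,F)$. Such minimizers exist because in a discrete setting nearest points are attained (and if one is worried about this, the same argument runs with $\varepsilon$-near minimizers and a limit at the end).

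Next, I would apply the nearest neighbor property to these two points $y_1, y_2 \in F$ with the point $x$ playing the role of ``$z$'' in the definition. This produces some $y \in F$ satisfying
\[ 2\|y - x\| \le \|y_1 + y_2 - 2x\|. \]
The essential algebraic trick is to rewrite $y_1 + y_2 - 2x = (y_1 - (x+z)) + (y_2 - (x-z))$, so that the triangle inequality for the norm yields
\[ \|y_1 + y_2 - 2x\| \le \|y_1 - (x+z)\| + \|y_2 - (x-z)\| = d(x+z,F) + d(x-z,F). \]

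Combining these two inequalities with the trivial bound $d(x,F) \le \|y - x\|$ (valid since $y \in F$) gives exactly
\[ 2\, d(x,F) \le d(x+z,F) + d(x-z,F), \]
which is midpoint convexity of $d(\cdot,F)$. The subharmonicity claim then follows immediately from Theorem \ref{T:cvx=>sub}, since that theorem only requires midpoint convexity.

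The main subtlety, I expect, is less computational and more about the hypotheses. Convexity of $F$ does not appear explicitly in the chain of inequalities above; the real work is being done by the nearest neighbor property, which packages together a geometric condition that a convex set in this setting is supposed to satisfy (as a discrete analogue of projection onto a convex set). So the one point I would double-check when writing out the details is whether any existence step for $y_1, y_2$ tacitly uses convexity or completeness of $F$, and whether convexity is needed to ensure the nearest neighbor property is nonvacuous for nontrivial examples; otherwise the proof is essentially a two-line application of the defining inequality and the triangle inequality.
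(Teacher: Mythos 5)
Your proposal is correct and is essentially the paper's own argument: pick minimizers $y_1,y_2\in F$ for the two shifted points, invoke the nearest neighbor property at the center point, and finish with the triangle inequality applied to $y_1+y_2-2x=(y_1-(x+z))+(y_2-(x-z))$; the paper merely normalizes the center to $0$ by translation first, and likewise uses only the nearest neighbor property (not convexity of $F$) in the computation. Your remark about attainment of the infimum (or the $\varepsilon$-approximation fallback) is a reasonable point of care that the paper glosses over.
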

\begin{proof}
Pick any $z\in X\setminus F$.  We will show that $d(\cdot,F)$ is midpoint convex at $z$.  By replacing $F$ with $F-z$ we may assume without loss of generality that $z=0$.

Clearly it is possible for there to be an $x\in B_r(0)$ such that $d(x,F)\le d(0,F)$.  However by switching to normed abelian groups we've a strong property to use.  Namely that if $x\in B_r(0)$ then $-x\in B_r(0)$.  We will show that for convex sets with the nearest neighbor property, that
\[2d(0,F)\le d(x,F)+d(-x,F),\]
that is to say that $d(\cdot, F)$ is midpoint convex (and hence subharmonic).

We can find $y_1, y_2\in F$ such that $d(x,F)= ||x - y_1||$ and $d(-x,F)= ||(-x) - y_2||$.  Let $y$ be a point in $F$ such that $2||y||\le ||y_1+y_2||$.  Then
\begin{align*}
2d(0, F) &\le 2||y|| \\
& \le ||y_1+y_2||  \\
& = ||y_1+y_2 + x - x||  \\
& = ||(y_1-x) + (y_2+x)|| \\
& \le ||y_1-x|| +||y_2+x|| \\
& =  d(x,F)+d(-x,F). \qedhere
\end{align*}

\end{proof}

\bibliographystyle{amsplain}
\bibliography{convex-ref}
\end{document}